\definecolor{couleurrouge}{RGB}{210,0,0}
\def\RR{I\kern-0.35em R\kern0.2em \kern-0.2em}
\def\BBN{I\kern-0.35em  N\kern0.2em \kern-0.2em}
\newcommand{\be}{\begin{equation}}
\newcommand{\ee}{\end{equation}}
\newcommand{\bea}{\begin{eqnarray}}
\newcommand{\eea}{\end{eqnarray}}
\newcommand{\beann}{\begin{eqnarray*}}
\newcommand{\eeann}{\end{eqnarray*}}
\newcommand{\R}{\mathbb{R}}
\newcommand{\ba}{\bar{a}}
\newcommand{\ta}{\tilde{a}}
\newcommand{\sign}{\text{sign}}
\newcommand{\ball}[2]{\overline{B_{#1}({#2})}}
\newcommand{\bydef}{\stackrel{\mbox{\tiny\textnormal{\raisebox{0ex}[0ex][0ex]{def}}}}{=}} 
\newtheorem{thm}{Theorem}[section]
\newtheorem{lem}[thm]{Lemma}
\begin{document}

\title{Traveling wave oscillatory patterns in a signed Kuramoto-Sivashinsky equation with absorption}

\author{
Yvonne Bronsard Alama \thanks{McGill University, Department of Mathematics and Statistics, 805 Sherbrooke Street West, Montreal, QC, H3A 0B9, Canada. {\tt yvonne.alama@mail.mcgill.ca}}
\and
Jean-Philippe Lessard\thanks{McGill University, Department of Mathematics and Statistics, 805 Sherbrooke Street West, Montreal, QC, H3A 0B9, Canada. {\tt jp.lessard@mcgill.ca}. This author was supported by NSERC.}
}

\date\today

\maketitle

\begin{abstract}
In this paper, a partial proof of a conjecture raised in \cite{MR2272794} concerning existence and global uniqueness of an asymptotically stable periodic orbit in a fourth-order piecewise linear ordinary differential equation is presented. The fourth-order equation comes from the study of traveling wave patterns in a signed Kuramoto-Sivashinsky equation with absorption. The proof is twofold. First, the problem of solving for the periodic orbit is transformed into a zero finding problem on $\R^4$, which is solved with a computer-assisted proof based on Newton's method and the contraction mapping theorem. Second, the rigorous bounds about the periodic orbit in phase space are combined with the theory of discontinuous dynamical systems to prove that the orbit is asymptotically stable.
\end{abstract}

\begin{center}
{\bf \small Key words.} 
{ \small Traveling wave patterns, Kuramoto-Sivashinsky model, discontinuous dynamical systems, periodic orbits, computer-assisted proofs}
\end{center}


\section{Introduction}

The Kuramoto-Sivashinsky equation 
\begin{equation} \label{eq:original_KS}
u_t + \nabla^4 u + \nabla^2 u -(\nabla^2 u)^2 = 0,
\end{equation}
where $\nabla^2$ is the Laplace operator and $\nabla^4$ is the biharmonic operator, is a fourth-order semilinear parabolic PDE which was originally introduced to model flame front propagation and later became a popular model to analyze weak turbulence or spatiotemporal chaos \cite{MR1430739,MR876914,MR1050912,KT,MR2496834,MR0502829}. In an attempt to study extinction phenomena, Galaktionov and Svirshchevskii consider in \cite{MR2272794} a modification of \eqref{eq:original_KS}, namely the {\em signed KS equation with absorption}
\begin{equation} \label{eq:signed_KS_with_absorption}
u_t + \nabla^4 u + \sign(u) - (\nabla^2 u)^2 = 0 .
\end{equation}
Considering equation \eqref{eq:signed_KS_with_absorption} on the real line (i.e. $u=u(\xi,t)$, with $\xi \in \R$ and $t \ge 0$), and following the approach of \cite{MR2272794}, we plug the traveling wave ansatz $u(\xi,t) = f(y)$ (with $y \bydef \xi - c t$) in \eqref{eq:signed_KS_with_absorption} which leads to the problem  
\[
- c f'(y) + f^{(4)}(y) + \sign f(y) - (f''(y))^2  = 0, \quad y \in (0,\infty) \text{ and } f(0)=0.
\]
Following the approach of \cite{MR2272794} (based on their experience studying the thin film equation), we only keep the highest derivative term in the equation and this yields 
\begin{equation} \label{eq:signed_simple_equation}
f^{(4)}(y) +  \sign f(y) = 0, \quad y \in (0,\infty) \text{ and } f(0)=0.
\end{equation} 
To capture the oscillatory component $\varphi(s)$ in the solution of \eqref{eq:signed_simple_equation}, we change coordinates $(y,f(y)) \mapsto (s,\varphi(s))$ via
\[
f(y) = y^4 \varphi (s), \quad \text{with } \quad s \bydef \ln y
\]
and plugging the transformation in \eqref{eq:signed_simple_equation} leads to the fourth-order piecewise linear ordinary differential equation
\begin{equation} \label{eq:fourth_order_ODE}  
\varphi^{(4)}(s) + 10 \varphi'''(s) + 35 \varphi''(s) + 50 \varphi'(s) + 24 \varphi(s) +\sign(\varphi(s)) = 0.
\end{equation}
This change of coordinates turns the search for a blowup solution into the search for a periodic solution. The purpose of the present paper is to give a partial proof of the Conjecture 3.2 on page 150 of \cite{MR2272794}, about solutions of equation [4], which we now state as a theorem.

\begin{thm} \label{thm:main_result}
Equation \eqref{eq:fourth_order_ODE} has a nontrivial asymptotically stable periodic solution.
\end{thm}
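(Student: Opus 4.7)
The ODE \eqref{eq:fourth_order_ODE} is piecewise linear, with affine right-hand side on each halfspace $\{\varphi>0\}$ and $\{\varphi<0\}$ sharing the homogeneous characteristic polynomial
$$\lambda^{4}+10\lambda^{3}+35\lambda^{2}+50\lambda+24=(\lambda+1)(\lambda+2)(\lambda+3)(\lambda+4),$$
so in each region the flow is an explicit linear combination of $e^{-s},\dots,e^{-4s}$ together with the constant $\mp\tfrac{1}{24}$, and the equilibrium of each affine piece lies outside its own halfspace, which is consistent with sustained oscillation. The equation is invariant under $\varphi\mapsto-\varphi$, so a natural ansatz is a symmetric periodic orbit of period $2\tau$ satisfying $\varphi(s+\tau)=-\varphi(s)$ and crossing the switching manifold $\Sigma=\{\varphi=0\}$ transversally. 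I would parametrise such an orbit by $x=(\varphi'(0),\varphi''(0),\varphi'''(0),\tau)\in\R^{4}$ with the conventions $\varphi(0)=0$ and $\varphi'(0)>0$, and define $F\colon\R^{4}\to\R^{4}$ whose first component enforces return to $\Sigma$ after time $\tau$ (using the explicit affine flow $\Phi_{\tau}$ on $\{\varphi>0\}$) and whose remaining three components impose the sign-reversal symmetry $(\Phi_{\tau})_{\varphi^{(k)}}+x_{k}=0$ for $k=1,2,3$. Starting from a numerical zero $\bar{x}$, one forms a Newton-like operator $\mathcal{T}(x)=x-AF(x)$ with $A\approx DF(\bar{x})^{-1}$ and carries out the standard radii-polynomial / Newton--Kantorovich argument in interval arithmetic to produce $r>0$ for which $\mathcal{T}$ contracts $\overline{B_{r}(\bar{x})}$ into itself. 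The Banach fixed point theorem then yields a unique $x^{\ast}\in\overline{B_{r}(\bar{x})}$, and the involution symmetry extends the corresponding arc to a genuine nontrivial periodic solution.

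\textbf{Asymptotic stability.} If the enclosure of $x^{\ast}$ certifies $\varphi'(0)>0$, the crossing of $\Sigma$ is transversal and a Poincar\'e first-return map $P$ is well-defined on a neighbourhood of $x^{\ast}$ in $\Sigma\cap\{\varphi'>0\}$. On each halfspace, $P$ is the composition of the explicit matrix-exponential flow with the projection onto $\Sigma$ at the hitting time, so by the implicit function theorem $DP(x^{\ast})$ is a $3\times 3$ matrix admitting a closed-form expression in terms of $e^{B\tau^{\ast}}$ and the vector field at the return point. The plan is then to use interval arithmetic to enclose the spectrum of $DP(x^{\ast})$ and verify that all three eigenvalues lie strictly inside the unit disc; classical Poincar\'e-section results for piecewise smooth (Filippov-type) flows transversal to their switching manifold then yield local asymptotic stability of the closed orbit.

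\textbf{Main obstacle.} The existence step fits into a by-now standard computer-assisted framework and should be essentially routine; the delicate work lies in the stability step. The interval enclosure of $x^{\ast}$ must be simultaneously tight enough to certify strict transversality along the orbit and to push the enclosed spectrum of the $3\times 3$ monodromy matrix strictly inside the unit disc, and one must invoke a Poincar\'e-map stability theory valid for a vector field with a genuine jump discontinuity across $\Sigma$. Bridging the sharp phase-space bounds produced by the contraction argument to these spectral bounds, via the discontinuous dynamical systems machinery alluded to in the abstract, is where the real work of the proof will be concentrated.
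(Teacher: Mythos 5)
Your overall strategy coincides with the paper's: the same symmetry ansatz $\varphi(s+\tau)=-\varphi(s)$, the same four unknowns (three initial derivatives on $\Sigma$ plus the half-period), the same explicit matrix-exponential flow, a radii-polynomial/Newton--Kantorovich contraction for existence, and a Floquet-type spectral computation for stability. Your $3\times 3$ Poincar\'e-return-map derivative is equivalent to the paper's $4\times 4$ monodromy matrix built from $e^{M\tilde L}$ and the saltation matrices $S_{\pm\mp}$ (the nontrivial Floquet multipliers are exactly the eigenvalues of $DP$); the saltation-matrix route just makes the transversality correction $\nabla h^T f_\pm = \varphi'(0)\neq 0$ explicit rather than hiding it in an implicit-function-theorem differentiation of the hitting time.

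There is, however, one genuine gap in the existence step. A zero of your map $F$ only certifies the boundary conditions: $\varphi(\tau)=0$ and the sign-reversal of the derivatives at the endpoints. It does \emph{not} certify that the arc computed with the affine flow of the $\{\varphi>0\}$ piece actually remains in the halfspace $\{\varphi\ge 0\}$ for all $s\in(0,\tau)$. If the arc dipped below $\Sigma$ at an intermediate time, the function you construct would solve the wrong affine equation there and the glued curve would not be a solution of the signed ODE at all. This is condition \eqref{eq:extra_condition} in the paper, and it must be verified a posteriori; the paper does so by covering $[0,\tilde L]$ with a mesh of $300$ subintervals, enclosing $\tilde\phi_1$ with interval arithmetic on the middle portion, and using monotonicity of $\tilde\phi_1$ (via sign control of $\tilde\phi_2$) near the two endpoints where $\tilde\phi_1$ vanishes and a crude interval enclosure cannot separate it from zero. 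You should add this verification; the transversality check $\varphi'(0)>0$ you do mention is necessary but not sufficient for it.
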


The periodic solution of Theorem~\ref{thm:main_result} is portrayed in Figure~\ref{fig:periodic_solution_profile} and the corresponding traveling wave pattern $u(\xi,t)=f(\xi-ct) = (\xi-ct)^4 \varphi (\ln(\xi-ct))$ is plotted in Figure~\ref{fig:tw_profile}. Note that we set $f(\xi-ct)=0$ for $\xi-ct \le 0$, and that we did not solve for the wave speed $c$.

\begin{figure}[h] 
\centering
\includegraphics[width=2.5in]{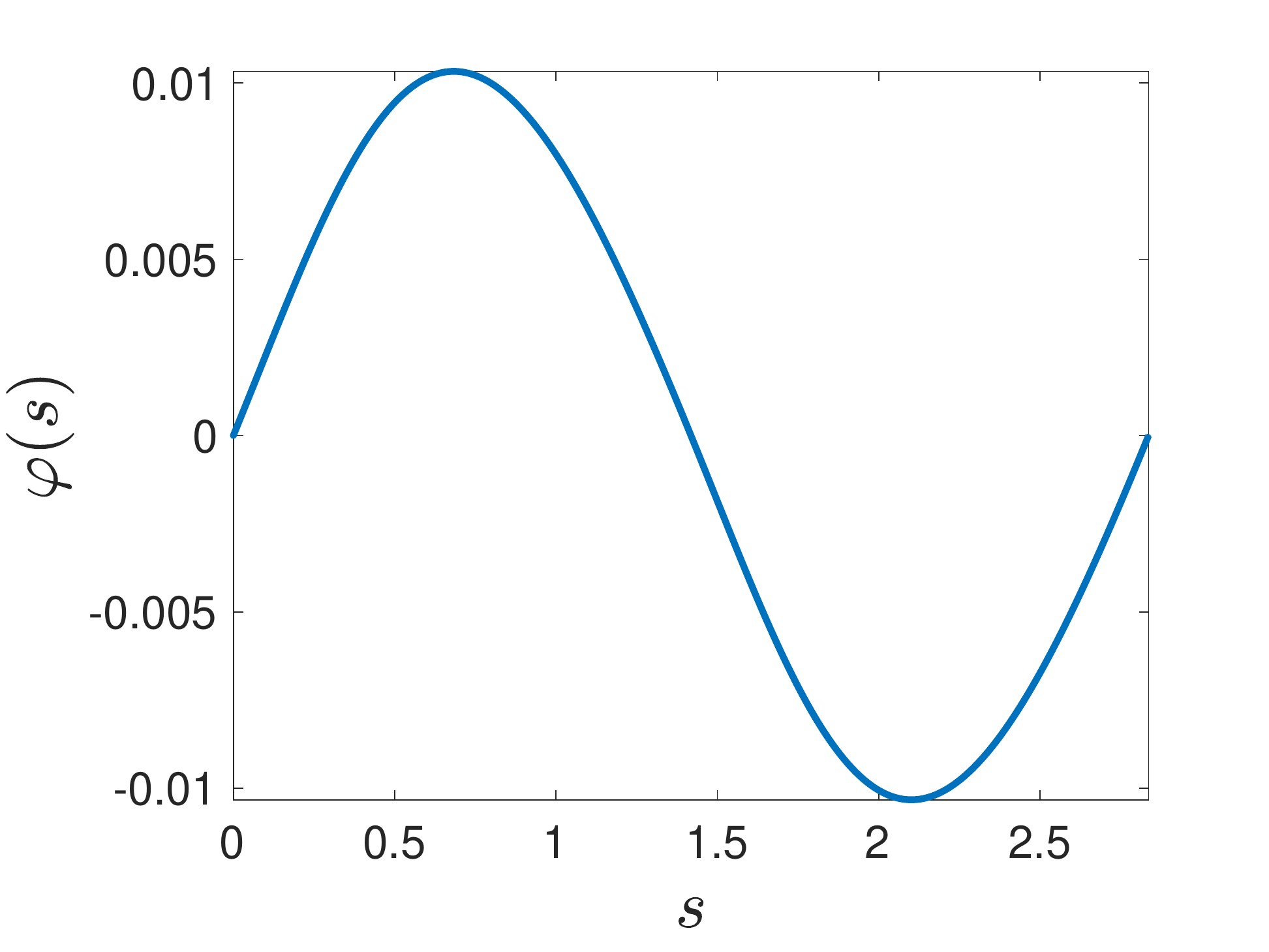}
~~~
\includegraphics[width=3in]{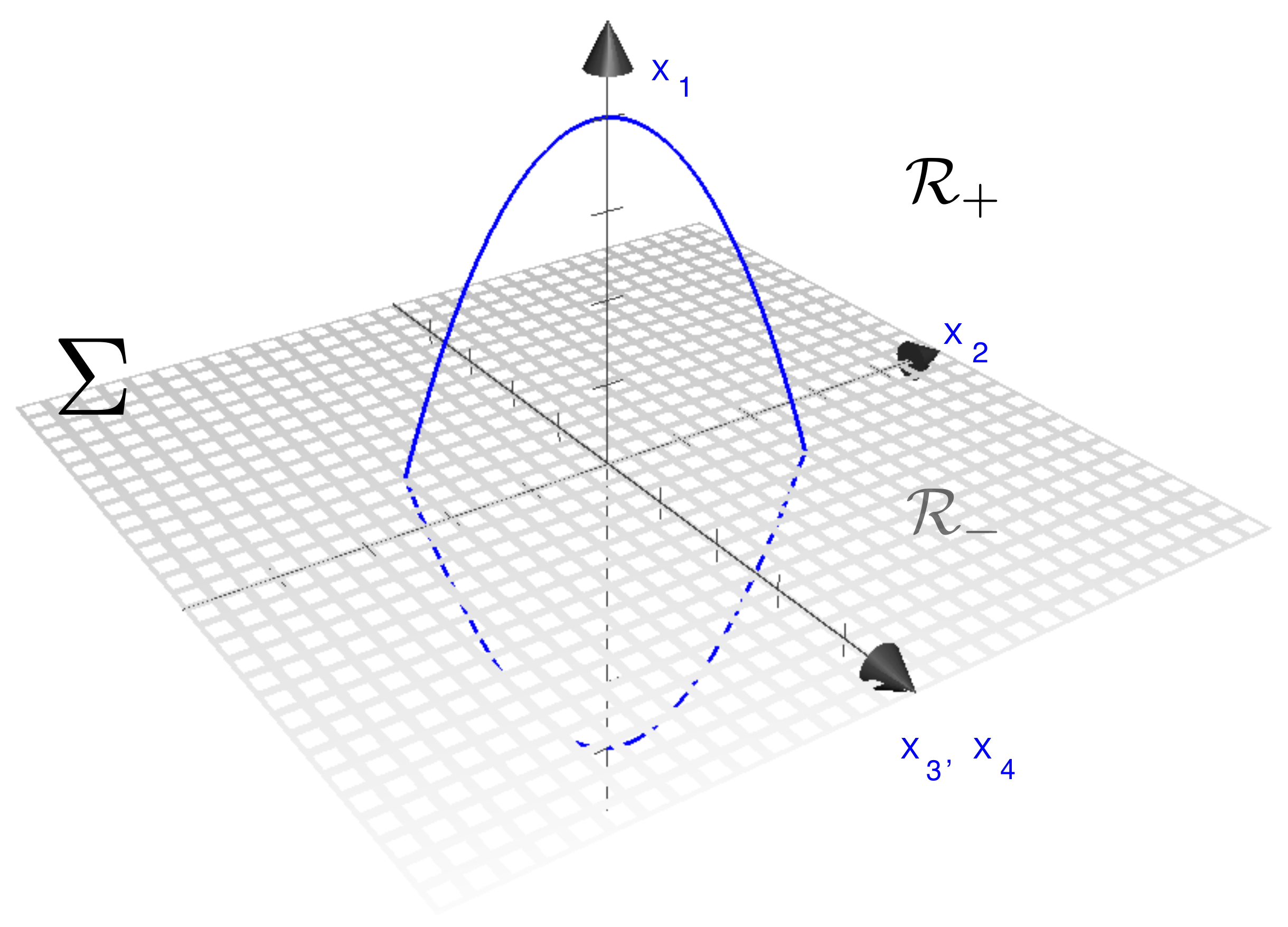}
\caption{Profile of the periodic solution of Theorem~\ref{thm:main_result} (left) and the corresponding orbit in the phase space (right).}
\label{fig:periodic_solution_profile}
\end{figure}

\begin{figure}[h] 
\centering
\includegraphics[width=6.5in]{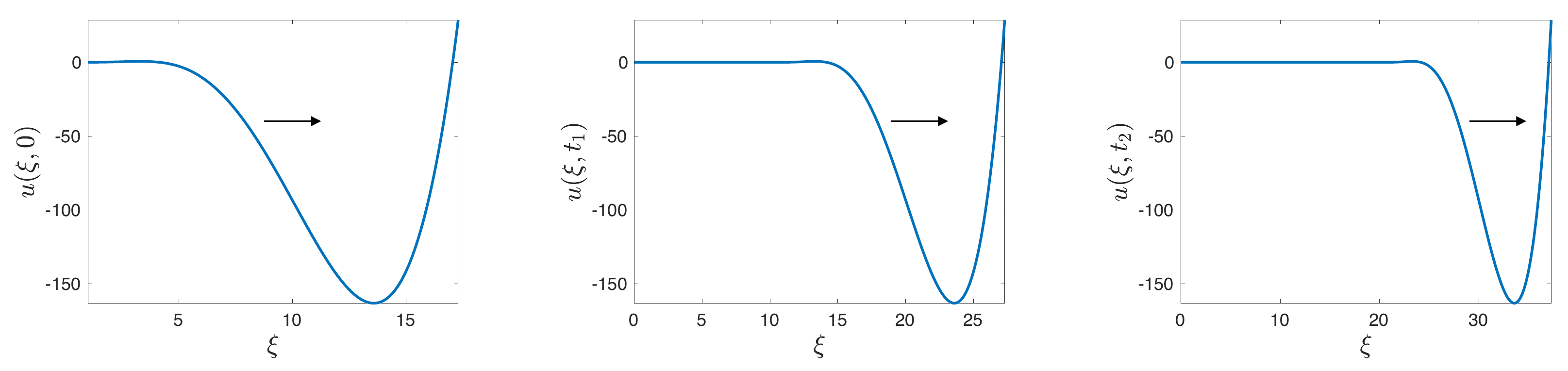}
\caption{Different snapshots of $u(\xi,t)=f(\xi-ct) = (\xi-ct)^4 \varphi (\ln(\xi-ct))$.}
\label{fig:tw_profile}
\end{figure}

The proof of Theorem~\ref{thm:main_result} has two parts. The first part of the proof (existence) is presented in Section~\ref{sec:existence}, where the problem of finding the periodic solution $\varphi(s)$ of \eqref{eq:fourth_order_ODE}  is transformed (via the symmetry argument of Lemma~\ref{lem:symmetry}) into a zero finding problem $F(a)=0$ where $F:\R^4 \to \R^4$ is defined in \eqref{eq:F_definition}. Proving the existence of $\ta \in \R^4$ such that $F(\ta)=0$ is done with a computer-assisted proof based on a Newton-Kantorovich type theorem (Theorem~\ref{thm:radPoly}). The second part of the proof is presented in Section~\ref{sec:stability}, where the rigorous enclosure of the periodic solution is combined with the theory of discontinuous dynamical systems to prove that the orbit is asymptotically stable. These two parts conclude the proof of Theorem~\ref{thm:main_result}.

\section{Existence: a computer-assisted proof} \label{sec:existence}

In this section, we prove the existence of a periodic solution $\varphi(s)$ of \eqref{eq:fourth_order_ODE}. To achieve this goal, we reformulate this into a zero finding problem $F(a)=0$ defined on $\R^4$. Proving the existence of a solution is done by verifying the hypotheses of Theorem~\ref{thm:radPoly} with the help of the digital computer and interval arithmetic (e.g. see \cite{MR0231516,Ru99a}). 

We begin by making the change of variables $x_1 \bydef \varphi$, $x_2 \bydef \varphi'$, $x_3 \bydef \varphi''$ and $x_4 \bydef \varphi'''$ to rewrite the fourth-order equation \eqref{eq:fourth_order_ODE} as the system
\begin{equation} \label{eq:system_ode}
\dot x = Mx +g(x) \bydef \begin{pmatrix}0&&1&&0&&0\\ 0 && 0 && 1 &&0 \\ 0&&0&&0&&1 \\ -24&&-50&&-35&&-10\end{pmatrix}\begin{pmatrix}x_1 \\ x_2 \\ x_3 \\ x_4\end{pmatrix} +\begin{pmatrix} 0 \\ 0 \\ 0 \\ -\sign(x_1)\end{pmatrix}. 
\end{equation}

Equation~\eqref{eq:system_ode} is a piecewise smooth dynamical system and changes rule as $x=(x_1,x_2,x_3,x_4)$ goes through the {\em switching manifold} defined by 
\[ 
\Sigma \bydef \{ x = (x_1,x_2,x_3,x_4) \in \R^4 : x_1 =0\}.
\]
The switching manifold $\Sigma$ separates the phase space $\R^4$ into the two regions $\mathcal R_+$ and $\mathcal R_-$ defined by $\mathcal R_+ \bydef \{ x \in \R^4 : x_1 \ge 0 \}$ and $\mathcal R_- \bydef \{ x \in \R^4 : x_1 \le 0 \}$. Denoting $b \bydef (0,0,0,-1)$, system \eqref{eq:system_ode} can then be written as 
\begin{equation} \label{eq:pws}
\dot x = 
\begin{cases}
f_{+}(x) \bydef Mx + b, & x \in \mathcal R_+
\\ 
f_{-}(x) \bydef Mx - b, & x \in \mathcal R_-.
\end{cases}
\end{equation}
Given $b_0 \in \{ \pm b\}$, the unique solution of $\dot x = Mx + b_0$, $x(0)=x_0 \in \R^4$ is given by
\begin{equation} \label{eq:variation_of_constant}
t \mapsto e^{Mt}x_0 + e^{Mt} \int_{0}^{t} e^{-Ms}b_0\, ds = e^{Mt} \left( x_0 + M^{-1} b_0 \right) - M^{-1}b_0.
\end{equation}
Note that 
\[
M^{-1} = 
\begin{pmatrix} -\frac{50}{24} & -\frac{35}{24} & -\frac{10}{24} & -\frac{1}{24} \\
1 & 0 & 0 & 0 
\\
0 & 1 & 0 & 0 
\\
0 & 0 & 1 & 0 
\end{pmatrix}
\]
Moreover, $M = P D P^{-1}$, $e^{Mt} = Pe^{Dt}P^{-1}$ where $e^{Dt} = {\rm diag} \left( e^{-4t} ~ e^{-3t} ~ e^{-2t} ~ e^{-t} \right)$ and 
\begin{equation}
P = \begin{pmatrix} 1& 1& 1&1\\ -4& -3& -2&-1\\16& 9&4&1\\-64 &-27&-8&-1\end{pmatrix}
\quad \text{and} \quad
P^{-1}= \begin{pmatrix}-1&-\frac{11}{6}&-1&-\frac{1}{6}\\ 4 & 7 & \frac{7}{2} & \frac{1}{2} \\ -6 & -\frac{19}{2} & -4 & -\frac{1}{2} \\ 4 & \frac{13}{3}& \frac{3}{2}&\frac{1}{6}\end{pmatrix}.
\end{equation}

We now introduce a result which exploits the symmetry of the problem and establishes a mechanism to obtain a periodic solution of \eqref{eq:pws}. 
\begin{lem} \label{lem:symmetry}
If there exist $L>0$ and a solution $\phi : [0,L] \to \R^4$ of $\dot x = f_{+}(x)$ with 
\begin{equation} \label{eq:continuity_condition}
\phi(L) = -\phi(0)
\end{equation}
and
\begin{equation} \label{eq:extra_condition}
\phi([0,L]) \subset \mathcal{R}_+,
\end{equation}
then $\phi(0),\phi(L) \in \Sigma$ and $\Gamma : [0, 2L] \to \R^4$ defined by
\begin{equation} \label{eq:Gamma_definition}
\Gamma(t) \bydef \begin{cases} 
\phi(t), & t \in [0,L] \\
-\phi(t - L), & t \in [L, 2L]
\end{cases}
\end{equation}
is a $2L$-periodic solution of \eqref{eq:pws}.
\end{lem}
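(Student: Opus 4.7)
The plan is to exploit the odd symmetry of the affine vector field, $f_{-}(x)=-f_{+}(-x)$, which comes directly from the fact that $b$ is flipped when $\sign(x_1)$ changes sign. First I would extract the boundary condition: since $\phi([0,L])\subset\mathcal{R}_+$ forces $\phi_1(t)\ge 0$ for all $t\in[0,L]$, evaluating at the endpoints and using $\phi_1(L)=-\phi_1(0)$ yields $\phi_1(0)\ge 0$ and $-\phi_1(0)\ge 0$, so $\phi_1(0)=\phi_1(L)=0$. This shows $\phi(0),\phi(L)\in\Sigma$.

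Next I would check that $\Gamma$ is well defined and continuous on $[0,2L]$. The only potential issue is at $t=L$, where the two branches give $\phi(L)$ and $-\phi(L-L)=-\phi(0)$ respectively; these agree by hypothesis \eqref{eq:continuity_condition}. For periodicity, $\Gamma(2L)=-\phi(L)=\phi(0)=\Gamma(0)$, again by \eqref{eq:continuity_condition}.

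It then remains to verify that $\Gamma$ solves \eqref{eq:pws} on each subinterval. On $[0,L]$ this is immediate from the definition of $\phi$ and from $\phi([0,L])\subset\mathcal{R}_+$. On $[L,2L]$, set $\psi(t)\bydef -\phi(t-L)$. Differentiating and using $\dot\phi=M\phi+b$ gives
\[
\dot\psi(t) = -\dot\phi(t-L) = -M\phi(t-L)-b = M\psi(t)-b = f_{-}(\psi(t)),
\]
which is the correct rule provided $\psi(t)\in\mathcal{R}_-$; but $\psi_1(t)=-\phi_1(t-L)\le 0$ precisely because of the sign condition \eqref{eq:extra_condition}. Therefore $\Gamma$ satisfies \eqref{eq:pws} on both $[0,L]$ and $[L,2L]$, and by construction it is $2L$-periodic.

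I do not expect any serious obstacle here: the result is essentially a bookkeeping argument about the $\mathbb{Z}_2$-equivariance of \eqref{eq:pws} under $(x,b)\mapsto(-x,-b)$. The only subtlety worth flagging is that one must use the sign hypothesis \eqref{eq:extra_condition} in two places — first to confine $\Gamma$ to $\mathcal{R}_+$ on $[0,L]$ (so that $\dot\Gamma=f_+(\Gamma)$ is the correct rule there), and second to confine the reflected piece to $\mathcal{R}_-$ on $[L,2L]$; both uses are needed to guarantee that $\Gamma$ respects the switching prescription of the piecewise system rather than merely being a formal solution of the two smooth ODEs.
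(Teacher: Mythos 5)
Your argument is correct and follows essentially the same route as the paper's proof: the endpoint sign argument to place $\phi(0),\phi(L)$ in $\Sigma$, the reflected solution $\psi(t)=-\phi(t-L)$ satisfying $\dot\psi=f_-(\psi)$ with $\psi([L,2L])\subset\mathcal{R}_-$, and the continuity/periodicity checks at $t=L$ and $t=2L$. No gaps.
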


\begin{proof}
First, $\phi([0,L]) \subset \mathcal{R}_+$ implies that $(\phi(0))_1,(\phi(L))_1 \ge 0$ and then $0 \le (\phi(L))_1 = -(\phi(0))_1 \le 0$. Hence, $(\phi(0))_1 = (\phi(L))_1 =0$, that is $\phi(0),\phi(L) \in \Sigma$.

Now, for $t \in [L,2L]$, $\psi(t)\bydef -\phi(t-L)$ solves $\dot x =f_{-}(x)$, as $\psi'(t) = - \phi'(t-L) = M(-\phi(t-L)) -b= M \psi(t) - b = f_-(\psi(t))$. Also $\phi([0,L]) \subset \mathcal{R}_+$ implies that $\psi([L,2L]) \subset \mathcal{R}_-$. Moreover, $\psi(L) = - \phi(0) = \phi(L) = -\psi(2L)$. 

By definition of $\Gamma(t)$ in \eqref{eq:Gamma_definition}, $\Gamma([0,L]) \subset \mathcal{R}_+$, $\Gamma([L,2L]) \subset \mathcal{R}_-$ and $\phi(L) = -\phi(L - L) = -\phi(0)$. Hence $\Gamma$ is continuous at $t=L$. 
 Finally, since $\Gamma(0) = \phi(0) = -\phi(L) = -\phi(2L - L) = \Gamma(2L)$, we conclude that $\Gamma(t)$ is a $2L$-periodic orbit of \eqref{eq:pws}. \qedhere
\end{proof}

To find the segment of the orbit $\phi : [0,L] \to \R^4$ solving $\dot x = f_{+}(x) = Mx + b$ as in Lemma~\ref{lem:symmetry}  we use formula \eqref{eq:variation_of_constant}, impose that the segment begins in the switching manifold (i.e. $\phi(0) \in \Sigma$) and that $\phi(L) = -\phi(0)$. Note that if $\phi(0) \in \Sigma$, then $\phi(0) = (0,a_2,a_3,a_4)$ for some $a_2,a_3,a_4 \in \R$. Using \eqref{eq:variation_of_constant}, the condition $\phi(L) = -\phi(0)$ reduces to solving
\begin{align*}
0 & = \phi(0)  + \phi(L) 
\\
& = \phi(0)  + e^{ML}\phi(0) + e^{ML} \int_{0}^{L} e^{-Ms}b  ~ ds
\\ 
& = P(I + e^{DL})P^{-1} \begin{pmatrix} 0 \\ a_2 \\ a_3 \\ a_4 \end{pmatrix} + \int_{0}^{L} Pe^{D(L-s)}P^{-1} b ~ ds.
\end{align*}
Denote $a \bydef (L,a_2,a_3,a_4)$ and let
\begin{equation} \label{eq:F_definition}
F(a) = 
F\begin{pmatrix}L \\ a_2 \\ a_3 \\ a_4\end{pmatrix}
\bydef P \left( e^{DL} + I \right) P^{-1}\begin{pmatrix}0 \\ a_2 \\ a_3 \\ a_4\end{pmatrix} + \int_{0}^{L} Pe^{D(L-s)}P^{-1}b\,ds.\end{equation}

To prove the existence of a periodic solution $\varphi(s)$ of \eqref{eq:fourth_order_ODE}, it is sufficient to prove the existence of a zero of $F:\R^4\to \R^4$ defined in \eqref{eq:F_definition} and then to verify the extra condition \eqref{eq:extra_condition}. While the rigorous verification of \eqref{eq:extra_condition} is done a-posteriori using interval arithmetics, the existence of a zero of $F$ is done using the {\em radii polynomial approach} (e.g. see \cite{MR2338393,MR3454370,MR3612178}) which is essentially the Newton-Kantorovich theorem (e.g. see \cite{MR0231218}). We now introduce this approach for a general $C^2$ map defined on $\R^n$. Endow $\R^n$ with the supremum norm $\|a\|_\infty = \max_{i=1,\dots,n}  |a_i|$ and denote by $\ball{r}{b} \bydef \left\{a \in \R^n  \mid  \| a - b \|_\infty \leq r \right\} \subset \R^n$ the closed ball of radius $r$ and centered at $b$. 

\begin{thm} \label{thm:radPoly}
Let $F\colon \R^n \to \R^n$ be a $C^2$ map. Consider $\ba \in \R^n$ (typically a numerical approximation with $F(\ba) \approx 0$). Assume that the Jacobian matrix $DF(\ba)$ is invertible and let $A \bydef DF(\ba)^{-1}$. Let $Y_0 \ge 0$ be any number satisfying
\begin{equation} \label{eq:Y0}
\| A F(\ba) \|_\infty \le Y_0.
\end{equation}
Given a positive radius $r_*>0$, let $Z_2=Z_2(r_*)$ be any number satisfying
\begin{equation} \label{eq:Z2_definition}
\sup_{a \in \ball{r_*}{\ba}} \biggl( \max_{1\leq i \leq n} \sum_{1\leq k,m \leq n} \Bigl| \sum_{1 \leq j \leq n} A_{ij} D^2_{km} F_j\bigl( a \bigr) \Bigr| \biggr) \le Z_2.
\end{equation}
Define the radii polynomial by
\begin{equation} \label{eq:radPoly}
p(r) \bydef Z_2 r^2 - r + Y_0.
\end{equation}
If there exists $r_0 \in (0,r_*]$ with $p(r_0) < 0$, then there exists a unique $\ta \in \ball{r_0}{\ba}$ such that $F(\ta) = 0$.
\end{thm}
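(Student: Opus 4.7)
The plan is to apply the Banach fixed point theorem to the Newton--like operator $T \colon \R^n \to \R^n$ defined by $T(a) := a - A F(a)$ on the complete metric space $\ball{r_0}{\ba}$. Since $A = DF(\ba)^{-1}$ is invertible, fixed points of $T$ coincide with zeros of $F$, so it suffices to prove that $T$ is a contractive self--map of $\ball{r_0}{\ba}$ and then invoke Banach's theorem.

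To show that $T$ maps the ball into itself, I would use the second--order Taylor expansion $F(a) = F(\ba) + DF(\ba)(a-\ba) + R(a)$ with integral remainder $R(a) = \int_0^1 (1-t)\, D^2 F(\ba + t(a-\ba))[a-\ba,\, a-\ba]\, dt$. Because $A\, DF(\ba) = I$, this telescopes to $T(a) - \ba = -A F(\ba) - A R(a)$. The first term is controlled by $Y_0$ through \eqref{eq:Y0}. For the second, working componentwise I would pull the quadratic factor $(a_k - \ba_k)(a_m - \ba_m)$ out of the triple sum and dominate it by $\|a - \ba\|_\infty^2 \le r_0^2$; the remaining inner expression is exactly the quantity estimated by $Z_2$ in \eqref{eq:Z2_definition} (as $\ball{r_0}{\ba} \subseteq \ball{r_*}{\ba}$), giving $\|A R(a)\|_\infty \le \tfrac{1}{2} Z_2 r_0^2$. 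Hence $\|T(a) - \ba\|_\infty \le Y_0 + \tfrac{1}{2} Z_2 r_0^2 \le Y_0 + Z_2 r_0^2 < r_0$, the last inequality being the hypothesis $p(r_0) < 0$.

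For the contraction property, I would write $T(a) - T(b) = \int_0^1 (I - A\, DF(b + t(a-b)))(a-b)\, dt$ and Taylor--expand the matrix $I - A\, DF(c)$ about $\ba$. Since $I - A\, DF(\ba) = 0$, the same row--sum bookkeeping as above yields $\|I - A\, DF(c)\|_\infty \le Z_2 \|c - \ba\|_\infty \le Z_2 r_0$ for every $c \in \ball{r_0}{\ba}$. The inequality $p(r_0) < 0$ gives $Z_2 r_0^2 < r_0$, hence $Z_2 r_0 < 1$, so $T$ is indeed a contraction. The Banach fixed point theorem then produces a unique $\ta \in \ball{r_0}{\ba}$ with $T(\ta) = \ta$, equivalently $F(\ta) = 0$.

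I do not anticipate a serious obstacle: the statement is a quantitative Newton--Kantorovich result, and the whole argument reduces to the contraction mapping principle. The only step requiring care is the bookkeeping of the $\|\cdot\|_\infty$--induced matrix norm (the row--sum norm), so that the nested maximum and sums in \eqref{eq:Z2_definition} emerge naturally from the componentwise remainder estimates rather than picking up spurious constants that would weaken the polynomial $p(r)$.
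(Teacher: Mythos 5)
Your proposal is correct and follows essentially the same route as the paper: both apply the contraction mapping theorem to the Newton-like operator $T(a)=a-AF(a)$ on $\ball{r_0}{\ba}$, using the $Z_2$ bound to control $||| I - A\,DF(c)|||_\infty \le Z_2 r_0 < 1$ and combining this with $Y_0$ to get the self-mapping property. The only cosmetic difference is that you derive the self-map estimate from a second-order Taylor expansion with integral remainder (picking up a factor $\tfrac12$ that you then discard), whereas the paper uses the triangle inequality through $T(\ba)$ together with the mean value inequality for $DT$; both yield the same conclusion.
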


\begin{proof}
Let $r \le r_*$ and consider $c \in \ball{r}{\ba}$. Applying the Mean Value Inequality and using \eqref{eq:Z2_definition},
\begin{equation} \label{eq:Z2_inequality}
||| A (DF(c) - DF(\ba)) |||_\infty  \le \sup_{\|h\|_\infty = 1} \sup_{ a \in \ball{r}{\ba}}  |||  A D^2F(a) h |||_\infty \| w - \ba\|_\infty
\le Z_2 r,
\end{equation}
where $||| \cdot |||_\infty $ denotes matrix norm.
Define the {\em Newton-like operator} $T \colon \R^n \to \R^n$ by $T(a) = a - AF(a)$.
Since $A$ is invertible, $F(\ta) = 0$ if and only if $T(\ta) = \ta$. 
Let $r_0 > 0$ be such that $p(r_0) < 0$. Hence $Z_2 r_0^2 + Y_0 < r_0$ and $Z_2 r_0 +  \frac{Y_0}{r_0} < 1$. Since $Y_0,Z_2 \ge 0$, one gets that
\begin{equation}\label{eq:polyBound2}
Z_2 r_0 < 1.
\end{equation}
For any $a \in \overline{B_{r_0}(\ba)}$, apply \eqref{eq:Z2_inequality} to get 
\[
||| DT(a) |||_{\infty} = ||| I - A DF(a) |||_{\infty} = ||| A [ DF(\ba) - DF(a)] |||_{\infty} \le  Z_2 r_0 < 1.
\]
Hence,
\begin{align*}
\|T(a) - \ba\|_\infty &\leq \|T(a) - T(\ba)\|_\infty + \|T(\ba) - \ba\|_\infty \\
&\leq \sup_{c \in \overline{B_{r_0}(\ba)}} ||| DT(c) |||_\infty \|a - \ba\|_\infty + \|A F(\ba) \|_\infty \\
&\leq \left(Z_2 r_0\right)r_0 + Y_0 < r_0.
\end{align*}
Then $T$ maps $\overline{B_{r_0}(\ba)}$ into itself. Finally, given $a_1, a_2 \in \overline{B_{r_0}(\ba)}$ combine 
 \eqref{eq:polyBound2} with the Mean Value Inequality to get
\[
\| T(a_1) - T(a_2) \|_\infty \le \sup_{c \in \overline{B_{r_0}(\ba)}} ||| DT(c) |||_\infty \| a_1 - a_2 \|_\infty 
\le (Z_2 r_0)  \|a_1 - a_2\|_\infty 
\le \kappa \|a_1 - a_2 \|_\infty,
\]
where $\kappa \bydef Z_2 r_0 < 1$.  Then, by the Contraction Mapping Theorem,
$T$ has a unique fixed point $\ta \in \overline{B_{r_0}(\ba)}$.  It follows from the invertibility of $A$ that $\ta$
is the unique zero of $F$ in $\overline{B_{r_0}(\ba)}$.  \qedhere
\end{proof}

We now apply Theorem~\ref{thm:radPoly} to prove the existence of a zero of $F$ defined in \eqref{eq:F_definition}. This begins by computing an approximate solution. Applying Newton's method, we find an approximate zero of $F$ given by
\begin{equation}
\ba = 
\begin{pmatrix}
1.418316134968973 \\ 
2.245235091886104 \times10^{-2} \\ 
8.358590910573891 \times10^{-3} \\ 
-4.883983455701284\times10^{-2}
\end{pmatrix}. 
\end{equation}

Then, using INTLAB (see \cite{Ru99a}) we compute rigorous enclosures of $DF(\ba)$ and $A \bydef DF(\ba)^{-1}$. We then verify rigorously that $Y_0 \bydef 7.4 \times 10^{-15} \geqslant \|{AF(\ba)}\|_{\infty}$, which settles the computation of the bound \eqref{eq:Y0}. 

The next bound to compute is $Z_2$ satisfying \eqref{eq:Z2_definition}.  
The only non zero second partial derivatives are the terms $\frac{\partial^2 F_j}{\partial a_1 \partial a_k}$ for $j, k \in \{1,\dots,4\}$, where we note that by Clairaut's theorem $\frac{\partial^2 F_j}{\partial a_1 \partial a_k} = \frac{\partial^2 F_j}{\partial a_k \partial a_1}$ for $k\in \{2,3,4\}, j \in \{1,\dots,4\}$. Hence, we can write the bound \eqref{eq:Z2_definition} as
\begin{align} \nonumber
Z_2(r_*)& \ge \max_{1\le i\le 4} \left\{ \sup_{c \in {\overline {B_{r_*}(\ba)}}} 
\left(
\left| \sum_{1\le j\le 4} A_{i,j} {\partial^2 F_j\over{\partial^2 a_1}} (c) \right| +2\left| \sum_{1\le j\le 4} A_{i,j} {\partial^2 F_j\over{\partial a_1\partial a_2}} (c) \right| \right. \right.
\\ 
&\hspace{3.5cm} + \left. \left.  2 \left| \sum_{1\le j\le 4} A_{i,j} {\partial^2 F_j\over{\partial a_1\partial a_3}} (c) \right| 
+2 \left| \sum_{1\le j\le 4} A_{i,j} {\partial^2 F_j\over{\partial a_1\partial a_4}} (c) \right| \right) \right\}.
\label{eq:Z2_our_problem}
\end{align}
Choosing $r_* = 0.01$ we use interval arithmetic to obtain that $Z_2 \bydef 41$ satisfies \eqref{eq:Z2_our_problem}.

Therefore, for $r\leqslant r_*$ the radii polynomial is given by
\[
p(r) = 41 r^2 -r +  7.4 \times 10^{-15}.
\]
Using interval arithmetic, we show that for every $r_0 \in [7.5 \times 10^{-15}, 0.01]$, $p(r_0)<0$.
By Theorem~\ref{thm:radPoly}, there exists a unique zero $\ta$ of $F$ in $\ball{7.5 \times 10^{-15}}{\ba}$. Denote $\ta = (\tilde L,\ta_2,\ta_3,\ta_4)$. Then since $|\tilde L- \ba_1| \le \| \ta - \ba \|_\infty \le  7.5 \times 10^{-15}$ and $\ba_1 = 1.418316134968973$, we conclude that $\tilde L>0$. By construction, 
\begin{equation} \label{eq:tilde_phi}
\tilde \phi(t) \bydef e^{Mt} \left( \tilde \phi(0) + M^{-1} b \right) - M^{-1}b
= e^{Mt} \left( \begin{pmatrix} 0 \\ \ta_2 \\ \ta_3 \\ \ta_4 \end{pmatrix} + M^{-1} b \right) - M^{-1}b
\end{equation}
defines a solution $\tilde \phi : [0,\tilde L] \to \R^4$ of $\dot x = f_{+}(x)$ with $\tilde \phi(\tilde L) = -\tilde \phi(0)$. The last hypothesis which needs to be verified to apply Lemma~\ref{lem:symmetry} is the condition \eqref{eq:extra_condition}, that is $\tilde \phi([0,\tilde L]) \subset \mathcal{R}_+$.
Using a MATLAB program using INTLAB, we consider a uniform time mesh (of size $300$) of the time interval $[0,\tilde L]$, that is $0=t_0<t_1< \cdots < t_{300} = \tilde L$. For each mesh interval $I_k=[t_{k-1},t_k]$ ($k=1,\dots,300$), the code computes an interval enclosure of $\tilde \phi(I_k)$ using formula \eqref{eq:tilde_phi}. Then the code verifies that $\tilde \phi_1(t)>0$ for all $t \in I_k$ and $k = k_1,\dots,k_2$ for some $1 < k_1 < k_2 < 300$. This implies that $\tilde \phi([t_{k_1-1},t_{k_2}]) \subset \mathcal R_+$. Afterward, it verifies that $\tilde \phi'_1(t) = \tilde \phi_2(t)>0$ for all $t \in I_k$ and $k = 1,\dots,k_1-1$. Hence $\tilde \phi_1(t)$ is strictly increasing over the interval $[0,t_{k_1-1}]$, and since $\tilde \phi_1(0)=0$, it follows that $\tilde \phi_1(t)>0$ for all $t \in (0,t_{k_1-1}]$, that is $\tilde \phi([0,t_{k_1-1}]) \subset \mathcal R_+$. Similarly, the code verifies that $\tilde \phi'_1(t) = \tilde \phi_2(t)<0$ for all $t \in I_k$ and $k = k_2+1,\dots,300$. Hence $\tilde \phi_1(t)$ is strictly decreasing over the interval $[t_{k_2},\tilde L]$, and since $\tilde \phi_1(\tilde L)=0$, it follows that $\tilde \phi_1(t)>0$ for all $t \in [t_{k_2},\tilde L)$, that is $\tilde \phi([t_{k_2},\tilde L]) \subset \mathcal R_+$. We conclude that 
\[
\tilde \phi([0,\tilde L]) = 
\tilde \phi([0,t_{k_1-1}])
\cup
\tilde \phi([t_{k_1-1},t_{k_2}])
\cup
\tilde \phi([t_{k_2},\tilde L])
\subset \mathcal{R}_+.
\]
Hence $\tilde \phi : [0,\tilde L] \to \R^4$ verifies the hypotheses of Lemma~\ref{lem:symmetry}. We conclude that $\tilde \phi(0),\tilde \phi(\tilde L) \in \Sigma$ and that
\begin{equation} \label{eq:tilde_Gamma_definition}
\tilde \Gamma(t) \bydef \begin{cases} 
\tilde \phi(t), & t \in [0,\tilde L] \\
-\tilde \phi(t - \tilde L), & t \in [\tilde L, 2\tilde L]
\end{cases}
\end{equation}
is a $2\tilde L$-periodic solution of \eqref{eq:pws}. All the computational steps described in this section are carried out in the MATLAB program {\tt Proof.m} available at \cite{webpage}.

\section{Asymptotic stability} \label{sec:stability}

In this section, we demonstrate that the $2\tilde L$-periodic orbit $\tilde \Gamma(t)$ defined in \eqref{eq:tilde_Gamma_definition} is asymptotically stable using the theory of discontinuous dynamical systems (e.g. see \cite{MR2368310}). We do this by computing the monodromy matrix $X(2\tilde L)$ of $\tilde \Gamma$ and show that all its nontrivial Floquet multipliers have modulus less than one.

Define $h:\R^4 \to \R$ by $h(x) \bydef x_1$ so that the switching manifold is given by
\[
\Sigma = \{ x \in \R^4 : h(x) = 0 \}.
\]
Denote by $\ta^{(1)} \bydef \tilde \phi(\tilde L)=(0,-\ta_2,-\ta_3,-\ta_4)^T$ the point at which $\tilde \Gamma$ crosses $\Sigma$ coming from $\mathcal R_+$ and entering in $\mathcal R_-$. Similarly, denote by $\ta^{(2)} \bydef (0,\ta_2,\ta_3,\ta_4)^T$ the point at which $\tilde \Gamma$ crosses $\Sigma$ coming from $\mathcal R_-$ and entering in $\mathcal R_+$. For that reason, $\tilde \Gamma$ is called a {\em crossing periodic orbit} (e.g. see \cite{MR3285646,MR2012652}).
Denote by $\tilde T \bydef 2\tilde L$ the period of $\tilde \Gamma$, $\bar{t} \bydef \tilde L$, and denote by $\Phi(t,x_0)$ the solution of \eqref{eq:pws} at time $t$ with initial condition $x_0$. 
Then (i.e. see \cite{MR3662364}) the monodromy matrix is given by
\begin{equation} 
X(\tilde T) = X(\tilde T, \bar{t})S_{-+}(\ta^{(2)})X(\bar{t}, 0)S_{+-}(\ta^{(1)}) 
\end{equation}
where 
\begin{align*}
S_{+-}(\ta^{(1)}) &\bydef I + \left( \frac{(f_{-}-f_{+})} {\nabla h^{T} \cdot f_{+}} \nabla h^{T} \right)  (\ta^{(1)})
\\
S_{-+}(\ta^{(2)}) &\bydef I + \left( \frac{(f_{+}-f_{-})} {\nabla h^{T} \cdot f_{-}} \nabla h^{T} \right) (\ta^{(2)})
\end{align*}
are called the {\em saltation matrices}, and where the fundamental matrix solutions $X(t,0)$ and $X(t,\bar t)$ satisfy 
\begin{align*}
\dot X(t, 0) &= Df_{-}(\Phi(t,\tilde{a}^{(1)}))X(t, 0) = M X(t,0), \quad \text{for } t \in [0,\tilde L], \quad \text{with }X(0, 0) = I
\\ 
\dot X(t, \tilde L) &= Df_{+}(\Phi(t,\tilde{a}^{(1)}))X(t, L) = M X(t,\tilde L), \quad \text{for } t \in [\tilde L,2\tilde L], \quad \text{with }X(\tilde L, \tilde L) = I.
\end{align*}
This implies that $X(t,0) = e^{Mt}$ and therefore $X(\bar{t},0)= X(\tilde L,0) = e^{M\tilde L}$. Similarly, $X(t,\bar{t}) = e^{M(t-\tilde L)}$ and then $X(\tilde T,\bar{t}) = X(2\tilde L,\tilde L) = e^{M\tilde L}$.

Since $\nabla h = (1,0,0,0)^T$ we obtain that $\nabla h^{T} \cdot f_{+}(\ta^{(1)}) =  -\ta_2$ and $\nabla h^{T} \cdot f_{-}(\ta^{(2)}) =  \ta_2$. Simple computations yield
\[
S_{+-}(\ta^{(1)}) = 
\begin{pmatrix} 
1 & 0 & 0 & 0 \\ 
0 & 1 & 0 & 0 \\
0 & 0 &1 & 0 \\ 
-\frac{2} {\ta_2} & 0 & 0 & 1 
\end{pmatrix} = S_{-+}(\ta^{(2)}).
\]

Hence, the monodromy matrix is given by
\begin{equation}  \label{eq:monodromy}
X(2 \tilde L) = e^{M\tilde L} 
\begin{pmatrix} 
1 & 0 & 0 & 0 \\ 
0 & 1 & 0 & 0 \\
0 & 0 &1 & 0 \\ 
-\frac{2}{\ta_2} & 0 & 0 & 1 
\end{pmatrix}
e^{M \tilde L} 
\begin{pmatrix} 
1 & 0 & 0 & 0 \\ 
0 & 1 & 0 & 0 \\
0 & 0 &1 & 0 \\ 
-\frac{2}{\ta_2} & 0 & 0 & 1 
\end{pmatrix}.
\end{equation} 

Using interval arithmetics and that $|\tilde L - \ba_1|,|\ta_2 - \ba_2| \le 7.5 \times 10^{-15}$, we compute rigorously an interval enclosure of \eqref{eq:monodromy} and using the rigorous computational method from \cite{MR3204427} we prove that the spectrum $\sigma(X(2 \tilde L))$ of $X(2 \tilde L)$ satisfies
\[
\sigma(X(2 \tilde L)) \subset \bigcup_{i=1}^4 B_i
\]
where
\begin{align*}
B_1 &\bydef [   0.99999989798820,   1.00000010201179] 
\\
B_2 &\bydef [   0.05862265751705,   0.05862286154064] 
\\
B_3 &\bydef [   0.00000059034712,   0.00000079437071] 
\\
B_4 &\bydef [   0.00001170839977,   0.00001191242336].
\end{align*}
This rigorous computation is carried out in the MATLAB program {\tt Proof.m} available at \cite{webpage}. From this, we conclude that three Floquet multipliers of $\tilde \Gamma$ have modulus strictly less than one. This concludes the proof that the $2\tilde L$-periodic orbit $\tilde \Gamma(t)$ defined in \eqref{eq:tilde_Gamma_definition} is asymptotically stable, and hence the proof of Theorem 1.1.


\end{document}